\numberwithin{equation}{section}
\newtheorem{theorem}[equation]{Theorem}
\newtheorem{proposition}[equation]{Proposition}
\newtheorem{lemma}[equation]{Lemma}
\newtheorem{corollary}[equation]{Corollary}
\newtheorem{question}[equation]{Question}
\theoremstyle{definition}
\newtheorem{rmk}[equation]{Remark}
\newenvironment{remark}[1][]{\begin{rmk}[#1] \pushQED{\qed}}{\popQED \end{rmk}}
\newtheorem{eg}[equation]{Example}
\newenvironment{example}[1][]{\begin{eg}[#1] \pushQED{\qed}}{\popQED \end{eg}}
\newtheorem{defnaux}[equation]{Definition}
\newenvironment{definition}[1][]{\begin{defnaux}[#1]\pushQED{\qed}}{\popQED \end{defnaux}}
\newcommand{\bC}{\mathbf{C}}
\newcommand{\cC}{\mathcal{C}}
\newcommand{\cE}{\mathcal{E}}
\newcommand{\bF}{\mathbf{F}}
\newcommand{\rM}{\mathrm{M}}
\newcommand{\bN}{\mathbf{N}}
\newcommand{\bR}{\mathbf{R}}
\newcommand{\fS}{\mathfrak{S}}
\newcommand{\rf}{\mathrm{f}}
\newcommand{\arxiv}[1]{\href{http://arxiv.org/abs/#1}{{\tiny\tt arXiv:#1}}}
\newcommand{\DOI}[1]{\href{http://doi.org/#1}{\color{purple}{\tiny\tt DOI:#1}}}
\newcommand{\myuline}[1]{%
  \uline{\phantom{#1}}%
  \llap{\contour{white}{#1}}%
}
\DeclareMathOperator{\uRep}{\text{\myuline{\rm Rep}}}
\let\ul\underline
\renewcommand{\phi}{\varphi}
\DeclareMathOperator{\tr}{tr}
\DeclareMathOperator{\End}{End}
\DeclareMathOperator{\Aut}{Aut}
\DeclareMathOperator{\Hom}{Hom}
\newcommand{\id}{\mathrm{id}}
\renewcommand{\Vec}{\mathrm{Vec}}
\newcommand{\GL}{\mathbf{GL}}
\newcommand{\defn}[1]{\emph{#1}}
\newcommand{\bzero}{\mathbf{0}}
\newcommand{\bone}{\mathbf{1}}
\newcommand{\uotimes}{\mathbin{\ul{\otimes}}}
\title{Some fast-growing tensor categories}
\author{Andrew Snowden}
\date{May 27, 2023}
\begin{document}

\begin{abstract}
Given a semi-simple pre-Tannakian category over a finite field, we show that (a slight modification of) its linearization over a field of characteristic~0 is also semi-simple and pre-Tannakian. The key input is a result of Kuhn on the generic representation theory of finite fields. The resulting pre-Tannakian categories have substantially faster growth than previously known examples.
\end{abstract}

\maketitle
\tableofcontents

\section{Introduction}

\subsection{Growth in pre-Tannakian categories}

Pre-Tannakian categories are a natural class of tensor categories generalizing representation categories of algebraic groups (see \S \ref{ss:tencat} for the definition). Given an object $X$ in a pre-Tannakian category $\cC$ and a non-negative integer $n$, we let $\phi_X(n)$ be the (Jordan--H\"older) length of $X^{\otimes n}$. We refer to the function $\phi_X$ as the \defn{profile} of $X$. This paper concerns the following question, brought to our attention by Pavel Etingof:

\begin{question}
How fast can the function $\phi_X$ grow?
\end{question}

This question is a useful barometer for our understanding of pre-Tannakian categories. If there is some constraint on the growth of $\phi_X$, that would be a fundamental aspect of pre-Tannakian categories that is currently unknown. If not, we should be able to construct pre-Tannakian categories with arbitrary growth, but this seems to be currently out of reach. In this note we construct new pre-Tannakian categories with substantially faster growth than previously known examples.

Before stating our results, we recall what is known. If $\cC$ is the category of finite dimensional representations of an algebraic (super)group, then the length of $X^{\otimes n}$ is certainly at most the dimension of the underlying vector space, and so $\phi_X$ has at most exponential growth. Conversely, Deligne \cite{Deligne1} proved that if $\cC$ is a pre-Tannakian category in characteristic~0 such that $\phi_X$ has at most exponential growth for all objects $X$ then $\cC$ is super Tannakian.

Deligne--Milne \cite[Example~1.27]{DeligneMilne} gave the first example of a pre-Tannakian category with superexponential growth: the category $\uRep(\GL_t)$ has an object $X$ with $\phi_X(n) \approx \exp(n \log{n})$\footnote{The approximations for $\phi_X$ in the introduction are not asymptotic relations.}. Deligne \cite{Deligne2} later introduced the category $\uRep(\fS_t)$, which has similar growth. Knop \cite{Knop, Knop2} put these constructions into a general framework, and constructed many more pre-Tannakian categories, including $\uRep(\GL_t(\bF_q))$. This category has an object $X$ with $\phi_X(n) \approx \exp(n^2)$. Prior to this paper, Knop's category held the record for fastest growth.

In this paper, we prove the following theorem:

\begin{theorem} \label{mainthm1}
There is a semi-simple pre-Tannakian category $\cC$ over a field of characteristic~0 admitting an object $X$ such that $\phi_X(n) \approx \exp(\exp(n \log{n}))$.
\end{theorem}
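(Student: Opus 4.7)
The plan is to apply the main construction of the paper---the linearization procedure producing, from a semi-simple pre-Tannakian category over $\bF_q$, a semi-simple pre-Tannakian category in characteristic~$0$---to an input whose profile is already super-exponential. A natural choice is $\cD = \uRep_{\bF_q}(\fS_t)$, the $\bF_q$-linear form of Deligne's symmetric group category, taken at a $t$ transcendental over $\bF_q$ so that $\cD$ is semi-simple pre-Tannakian. The standard generator $Y$ has profile $\phi_Y(n) \approx \exp(n \log n)$ (essentially Bell-number growth), and in particular $\End_\cD(Y^{\otimes n})$ is a semi-simple $\bF_q$-algebra whose $\bF_q$-dimension grows like $\exp(n \log n)$. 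If it turns out that $\uRep_{\bF_q}(\fS_t)$ is not quite semi-simple in positive characteristic, any of the other known positive-characteristic semi-simple pre-Tannakian categories of Bell-number growth can be substituted.

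I would then pass to $\cC$ via the paper's construction and track what happens to endomorphism algebras. A ``linearization'' of an $\bF_q$-linear category into a $K$-linear category in characteristic~$0$ should replace a finite $\bF_q$-vector space $V$ of dimension $d$ by a $K$-vector space of dimension $|V| = q^d$, as occurs in group-algebra constructions on the underlying abelian group. Consequently, for the object $X \in \cC$ obtained from $Y$, one expects
\[
\dim_K \End_\cC(X^{\otimes n}) \;\approx\; q^{\dim_{\bF_q} \End_\cD(Y^{\otimes n})} \;\approx\; \exp\!\bigl(\exp(n \log n)\bigr).
\]
Since $\cC$ is semi-simple, writing $X^{\otimes n} \cong \bigoplus_i S_i^{\oplus d_i}$ with $S_i$ distinct simples gives $\phi_X(n) = \sum_i d_i$ and $\dim_K \End_\cC(X^{\otimes n}) = \sum_i d_i^2$, whence $\sqrt{\dim \End} \leq \phi_X(n) \leq \dim \End$. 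Either bound yields $\log \phi_X(n) \asymp \exp(n \log n)$, so $\phi_X(n) \approx \exp(\exp(n \log n))$ in the (non-asymptotic) sense used in the paper.

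The real content---and the main obstacle---lies not in this bookkeeping but in the main construction itself: establishing that the (slightly modified) linearization of a semi-simple pre-Tannakian category over $\bF_q$ remains semi-simple, rigid symmetric monoidal with $\End(\bbone) = K$, and abelian with finite-length objects. This is where Kuhn's theorem on the generic representation theory of $\GL(\bF_q)$ is essential: without semi-simplicity of the new endomorphism algebras there is no well-defined Jordan--H\"older length to estimate, and the dimension count collapses. Once the main construction is in hand, Theorem~\ref{mainthm1} reduces to the choice of input $\cD$ above together with the elementary dimension comparison just sketched.
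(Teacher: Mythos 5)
Your overall strategy---feed a positive-characteristic semi-simple pre-Tannakian category with Bell-number growth into the paper's linearization construction, then use the $q$-power dimension count together with $\sqrt{\psi} \le \phi \le \psi$---matches the paper's, and the bookkeeping in your second and third paragraphs is correct (it is exactly Proposition~\ref{prop:profile} combined with the profile/endomorphism inequalities).

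However, the specific input category you propose does not exist. The construction of Theorem~\ref{mainthm2} requires $\cE$ to be a semi-simple pre-Tannakian category over a \emph{finite} field $\bF$: all $\Hom$ spaces must be finite, because Kuhn's theorem is about the monoid algebras $k[\rM_{n,m}(\bF)]$ with $\bF$ finite. If you take $t$ transcendental over $\bF_q$ then the base field of $\uRep(\fS_t)$ is at least $\bF_q(t)$, which is infinite, so the $\Hom$ sets are infinite and the linearization has infinite-dimensional $\Hom$ spaces---the construction simply does not apply. Conversely, if $t \in \bF_q$ then $t$ is the image of an integer, and Deligne's interpolation category is not semi-simple at such parameters (nor is it semi-simple in characteristic $p$ in general, since its endomorphism algebras contain the non-semi-simple group algebras $\bF_q[\fS_n]$ for $n \ge p$). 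Your fallback clause---substituting ``any of the other known positive-characteristic semi-simple pre-Tannakian categories of Bell-number growth''---papers over the real difficulty: prior to \cite{repst, line} there were no such categories known, and the paper's actual choice, the Delannoy category $\cE = \uRep_{\bF}(\Aut(\bR,<))$ of \cite{line}, is cited precisely as the first example. Its object $\cC(\bR)$ has $\psi_{\cC(\bR)}(n) = a(2n)$, with $a$ the ordered Bell (Fubini) numbers, which supplies the needed $\exp(n\log n)$ input; this is what Example~\ref{ex:line} uses. So the gap is not in the reduction to Theorem~\ref{mainthm2} but in the existence of a valid $\cE$, which is a genuine input the proposal lacks.
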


See Example~\ref{ex:line} for more precise information on $\phi_X$.

\begin{remark}
Sophie Kriz \cite{Kriz} recently constructed non-abelian rigid tensor categories of arbitrary growth. These categories have nilpotents with non-zero trace, and therefore do not admit pre-Tannakian envelopes.
\end{remark}

\subsection{The construction}

Let $\cE$ be a semi-simple pre-Tannakian category over a finite field $\bF$. Fix an algebraically closed field $k$ of characteristic~0, and let $k[\cE]^{\sharp}$ be the additive--Karoubi envelope of the $k$-linearization of $\cE$. The monoid $(\bF, \cdot)$ acts on the category $k[\cE]^{\sharp}$, and so the monoid algebra $k[\bF]$ does as well. For a character $\chi \colon \bF^{\times} \to k^{\times}$, we define an idempotent $e_{\chi}$ in $k[\bF]$, and let $k[\cE]^{\sharp}_{\chi}$ be the full subcategory of $k[\cE]^{\sharp}$ spanned by objects $X$ satisfying $X=e_{\chi} X$. The tensor product $\otimes$ on $\cE$ induces a symmetric monoidal structure on $k[\cE]^{\sharp}_{\chi}$, which we denote $\uotimes$. We prove:

\begin{theorem} \label{mainthm2}
The category $k[\cE]^{\sharp}_{\chi}$ is semi-simple and pre-Tannakian.
\end{theorem}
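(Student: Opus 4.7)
My plan is to verify, in order, the defining properties of a semi-simple pre-Tannakian category for $k[\cE]^{\sharp}_{\chi}$: $k$-linear Karoubian additive structure, rigid symmetric monoidal structure, $\End$ of the unit equal to $k$, finite-dimensional Hom spaces and finite length objects, and semi-simplicity. The first four are reasonably formal; the last is where Kuhn's theorem does the real work.

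The formal parts rely on the identity $c \cdot e_{\chi} = \chi(c) e_{\chi}$ in $k[\bF]$ valid for $c \in \bF^{\times}$, which implies that for any $X \in k[\cE]^{\sharp}_{\chi}$ the natural endomorphism induced by $c$ equals $\chi(c) \cdot \id_X$. Because the action on a tensor product satisfies $c \cdot \id_{X \otimes Y} = (c \cdot \id_X) \otimes \id_Y = \id_X \otimes (c \cdot \id_Y)$, the product $X \otimes Y$ again lies in $k[\cE]^{\sharp}_{\chi}$, and the restriction of $\otimes$ to this subcategory is the structure denoted $\uotimes$. Rigidity is inherited because the $c$-action on $X^*$ agrees with that on $X$ (being the transpose of scalar multiplication), so duals computed in $k[\cE]^{\sharp}$ remain in the $\chi$-component. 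The unit is $e_{\chi}\bbone$, and using $e_{\chi} \cdot [c] = \chi(c) e_{\chi}$ for $c \in \bF^{\times}$ together with $e_{\chi} \cdot [0] = 0$ (for nontrivial $\chi$), one computes directly $\End(e_{\chi}\bbone) = e_{\chi} k[\bF] e_{\chi} = k$.

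Finite-dimensionality of Hom spaces comes from an orbit count. For $X_0, Y_0 \in \cE$ one has $\Hom_{k[\cE]}(X_0, Y_0) = k[V]$ where $V = \Hom_{\cE}(X_0, Y_0)$ is a finite-dimensional $\bF$-vector space. The scaling action of $\bF^{\times}$ on $V$ is free on $V \setminus \{0\}$, so $k[V \setminus \{0\}]$ is a free $k[\bF^{\times}]$-module of rank $(|V|-1)/(q-1)$. Hom in the subcategory, being $e_{\chi} k[V]$ (left and right multiplication by $e_{\chi}$ coincide by naturality), is then finite-dimensional. Finite length follows from finite-dimensionality of endomorphism algebras.

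The main obstacle is semi-simplicity of the endomorphism algebras. Given $X \in \cE$, semi-simplicity of $\cE$ gives $A := \End_{\cE}(X) \cong \prod_i M_{n_i}(\bF_i)$ for finite extensions $\bF_i/\bF$, and because the multiplicative monoid of a product is the product of multiplicative monoids, $k[A] \cong \bigotimes_i k[M_{n_i}(\bF_i)]$ compatibly with the diagonal $\bF^{\times}$-action. It therefore suffices to show that $e_{\chi} k[M_n(\bF')] e_{\chi}$ is a semi-simple $k$-algebra for any finite extension $\bF'/\bF$. This is precisely the input provided by Kuhn's theorem on the generic representation theory of finite fields, which supplies the semi-simple decomposition of $k[M_n(\bF')]$ as a representation of the scalar $\bF^{\times}$-action and shows that each character isotype is a semi-simple algebra. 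Granted this, every endomorphism ring in $k[\cE]^{\sharp}_{\chi}$ is semi-simple, so in the Karoubian category every object splits into a finite direct sum of simples; combined with the rigid symmetric monoidal structure and $\End(\bbone) = k$ established above, this yields the semi-simple pre-Tannakian structure.
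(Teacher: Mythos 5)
Your treatment of the formal items (closure of $\uotimes$ under $e_\chi$, rigidity, $\End$ of the unit, finiteness of Hom spaces) is essentially the same as the paper's and is fine. The gap is in the semi-simplicity step, and it is a genuine one.

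You only establish that $\End_{\cC}(e_\chi[X])$ is semi-simple for a \emph{single} object $X$ of $\cE$, by writing $\End_\cE(X)\cong\prod_i M_{n_i}(\bF_i)$, passing to the monoid algebra $k[\End_\cE(X)]\cong\bigotimes_i k[M_{n_i}(\bF_i)]$, and invoking Kov\'acs/Kuhn for $k[M_n(\bF')]$. But $\cC=k[\cE]^\sharp_\chi$ is the Karoubi envelope of an \emph{additive} envelope, so its objects are summands of $e_\chi\bigl([X_1]\oplus\cdots\oplus[X_r]\bigr)$ for several $X_i\in\cE$, and the criterion you are implicitly using (Proposition~\ref{prop:kar}) needs semi-simplicity of the endomorphism algebras of \emph{all} such direct sums. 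Crucially, $\End_{k[\cE]^\oplus}\bigl([X_1]\oplus\cdots\oplus[X_r]\bigr)=\bigoplus_{i,j}k[\Hom_\cE(X_i,X_j)]$ is \emph{not} a corner of $k[\End_\cE(X_1\oplus\cdots\oplus X_r)]$ -- the former has dimension $\sum_{i,j}|\Hom_\cE(X_i,X_j)|$, the latter $\prod_{i,j}|\Hom_\cE(X_i,X_j)|$, and the would-be projectors $[\iota_i\pi_i]$ are not orthogonal in the linearization (their product is $[0]$, a nonzero basis vector, not $0$). So the $r>1$ case is not reducible to the $r=1$ case you treated, nor to pairwise semi-simplicity of the $\End(e_\chi[X_i])$: semi-simplicity of an additive category needs control over Hom-spaces between distinct objects (Schur's lemma), which is exactly what the trace-pairing argument in Proposition~\ref{prop:kar} extracts from semi-simplicity of $\End(X\oplus Y)$.

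What the paper does instead is to prove directly that the algebras $R_{n_1,\ldots,n_r}(\bF)=\End_{k[\cE]^\oplus}\bigl([\bF^{n_1}]\oplus\cdots\oplus[\bF^{n_r}]\bigr)$ are semi-simple (Theorem~\ref{thm:kuhn}), and this requires the functor-category form of Kuhn's result (\cite[Cor.~1.3]{Kuhn2}), not just the semi-simplicity of the monoid algebra $k[M_n(\bF)]$: the $R_{n_1,\ldots,n_r}$ arise as endomorphism algebras of direct sums of distinct Yoneda projectives $P_{n_i}$ in the generic-representation category. The mixing of distinct simples $L_j$ with distinct endomorphism fields $\bF_j$ is then handled by the Segre-product lemmas (\ref{lem:lin-ss-1}--\ref{lem:lin-ss-3}). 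To repair your argument you would need to add these two ingredients; as written, the sentence ``every endomorphism ring in $k[\cE]^\sharp_\chi$ is semi-simple'' does not follow from what precedes it.
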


The proof of this theorem crucially relies on results of Kuhn \cite{Kuhn2} (which in turn builds on earlier work of Kov\'acs \cite{Kovacs}) on the representation theory of the monoid of matrices over a finite field. If $X$ is an object of $\cE$ then there is an associated object $Y=e_{\chi} [X]$ in $k[\cE]^{\sharp}_{\chi}$, and we have $\phi_Y \approx \exp(\phi_X)$. Thus by taking $\cE=\Vec_{\bF}^{\rf}$ to be the category of finite dimensional $\bF$-vector spaces, we obtain a doubly exponential profile (see Example~\ref{ex:vec}). By taking $\cE$ to be the Delannoy category (studied in \cite{line}), we obtain the even faster growth appearing in Theorem~\ref{mainthm1} (see Example~\ref{ex:line}).

\subsection{Additional comments}

We make a few additional remarks:
\begin{enumerate}
\item In recent work \cite{repst}, we constructed pre-Tannakian categories associated to oligomorphic groups. The categories constructed in Examples~\ref{ex:vec} and~\ref{ex:line} below can be obtained from this construction (as the main result of \cite{discrete} implies), and this is actually how we first found them. We plan to revisit this in future work.
\item If $\cE$ is a tensor category then $k[\cE]$ carries two tensor product operations $\ul{\oplus}$ and $\uotimes$, induced by the direct sum and tensor product on $\cE$. In this paper, we only use $\uotimes$. In prior work where categories like $k[\cE]$ appear, the tensor product $\ul{\oplus}$ (or similar ones) has played an important role, and $\uotimes$ seems to have not received much attention. See, e.g., \cite[\S 3.5]{Kuhn1} or \cite[\S 1.1]{Nagpal}.
\item If $\cE$ is any symmetric monoidal category then $k[\cE]^{\sharp}$ is naturally a tensor category. An interesting example is when $\cE$ is the category of ``vector spaces'' over the Boolean semiring. This is closely related to the ``correspondence functors'' studied by Bouc--Th\'evenaz \cite{Bouc}, and we plan to return to this example in future work. Unfortunately, it is not clear how to find pre-Tannakian categories from $k[\cE]^{\sharp}$ in general.
\item Let $\cE=\Vec_{\bF}^{\rf}$ be the category of finite dimensional vector spaces over a finite field $\bF$, so that $k[\cE]^{\sharp}_{\chi}$ is pre-Tannakian by Theorem~\ref{mainthm2}. This is the simplest construction of a pre-Tannakian category of superexponential growth that we know. It is also the first example we know of a pre-Tannakian category that does not admit deformations yet has objects of non-real dimension, assuming $k=\bC$ and $\chi$ is non-real.
\end{enumerate}

\subsection{Notation}

We fix an algebraically closed field $k$ of characteristic~0 for the duration of the paper. Some other important notation:
\begin{description}[align=right,labelwidth=2.5cm,leftmargin=!]
\item[ $\bF$ ] A finite field.
\item[ $\cE$ ] A category (typically with finite $\Hom$ sets).
\item[ $\cC$ ] A $k$-linear category.
\item[ $\Vec_F$] The category of $F$-vector spaces.
\item[ $\Vec^{\rf}_F$] The category of finite dimensional $F$-vector spaces.
\end{description}

\subsection*{Acknowledgments}

We thank Pavel Etingof, Nate Harman, Sophie Kriz, Noah Snyder, and Peter Webb for helpful conversations.

\section{Categorical preliminaries}

\subsection{Tensor categories} \label{ss:tencat}

We now recall some basic concepts related to tensor categories.

\begin{definition}
Let $\cC$ be a symmetric monoidal category with unit object $\bone$. A \defn{dual} of an object $X$ is an object $X^{\vee}$ equipped with maps $\alpha \colon \bone \to X \otimes X^{\vee}$ and $\beta \colon X^{\vee} \otimes X \to \bone$ such that the composition
\begin{displaymath}
\xymatrix@C=4em{
X \ar[r]^-{\alpha \otimes \id} & X \otimes X^{\vee} \otimes X \ar[r]^-{\id \otimes \beta} & X }
\end{displaymath}
is the identity, and similarly for the analogous sequence starting with $X^{\vee}$. If $X$ has a dual then it is unique up to isomorphism, and $X$ is called \defn{rigid}. The category $\cC$ is \defn{rigid} if every object is. See \cite[\S 2.10]{EGNO} for details.
\end{definition}

We note that a symmetric monoidal functor maps rigid objects to rigid objects (see \cite[Exercise~2.10.6]{EGNO}). Fix a field $F$ for the next two definitions.

\begin{definition} \label{def:tensor}
An \defn{$F$-linear tensor category} is an $F$-linear additive category equipped with an $F$-bilinear symmetric monoid structure.
\end{definition}

\begin{definition} \label{def:pretan}
An $F$-linear tensor category $\cC$ is \defn{pre-Tannakian} if:
\begin{enumerate}
\item $\cC$ is abelian and all objects have finite length.
\item All $\Hom$ spaces in $\cC$ are finite dimensional over $F$.
\item $\cC$ is rigid.
\item $\End_{\cC}(\bone)=F$, where $\bone$ is the monoidal unit. \qedhere
\end{enumerate}
\end{definition}

We warn the reader that \cite{EGNO} uses the term ``tensor category'' in a much stronger sense than us: ``pre-Tannakian'' in our sense is equivalent to ``symmetric tensor category'' in the sense of \cite{EGNO}. Our use of ``pre-Tannakian'' follows \cite[\S 2.1]{ComesOstrik}.

\subsection{Linearization} \label{ss:linear}

Let $\cE$ be a category. We define the \defn{linearization} of $\cE$, denoted $k[\cE]$, to be the following $k$-linear category. The objects of $k[\cE]$ are the same as the objects of $\cE$. For clarity, if $X$ is an object of $\cE$ then we write $[X]$ when we regard it as an object of $k[\cE]$. The morphisms in $k[\cE]$ are defined by
\begin{displaymath}
\Hom_{k[\cE]}([X],[Y]) = k[\Hom_{\cE}(X,Y)],
\end{displaymath}
where on the right side $k[S]$ means the $k$-vector space with basis $S$. For a morphism $\phi \colon X \to Y$ in $\cE$, we write $[\phi]$ for the corresponding morphism in $k[\cE]$. Composition in $k[\cE]$ is defined in the obvious manner: it is $k$-bilinear and satisfies $[\psi] \circ [\phi]=[\psi \circ \phi]$ for composable morphisms $\psi$ and $\phi$ of $\cE$. There is a natural functor $\cE \to k[\cE]$ defined by $X \mapsto [X]$ and $\phi \mapsto [\phi]$.

Suppose now that $\cE$ has a symmetric monoidal structure, and write $\otimes$ for the monoial operation. Then $k[\cE]$ inherits a $k$-bilinear symmetric monoidal structure. We denote the monoidal operation by $\uotimes$. On objects it is given by
\begin{displaymath}
[X] \uotimes [Y] = [X \otimes Y],
\end{displaymath}
and the formula for morphisms is similar. The functor $\cE \to k[\cE]$ is symmetric monoidal. In particular, if $X$ is a rigid object of $\cE$ then $[X]$ is a rigid object of $k[\cE]$.

\subsection{Additive and Karoubi envelopes} \label{ss:envelope}

Let $\cC$ be a $k$-linear category. The \defn{additive envelope} of $\cC$, denoted $\cC^{\oplus}$, is the $k$-linear category whose objects are tuples $(X_1, \ldots, X_n)$ where $X_1, \ldots, X_n$ are objects of $\cC$, and where morphisms $(X_1, \cdots, X_n) \to (Y_1, \cdots, Y_m)$ are given by $n \times m$ matrices with entries in $\Hom_{\cC}(X_i, Y_j)$. Composition is defined by matrix multiplication. The category $\cC^{\oplus}$ is additive, and $(X_1, \ldots, X_n)$ is the direct sum of the objects $X_1, \ldots, X_n$; we therefore prefer to use the notation $X_1 \oplus \cdots \oplus X_n$. Note that if $X$ and $Y$ are objects of $\cC$ such that $X \oplus Y$ exists in $\cC$, then $(X, Y)$ and $(X \oplus Y)$ are isomorphic in $\cC^{\oplus}$. We refer to \cite[\S 2.5]{ComesWilson} for details.

The \defn{Karoubi envelope} of $\cC$, denoted $\cC^{\rm kar}$, is the $k$-linear category whose objects are pairs $(X, e)$, where $X$ is an object of $\cC$ and $e$ is an idempotent of $\End_{\cC}(X)$, and where
\begin{displaymath}
\Hom_{\cC^{\rm kar}}((X,e), (Y,f))=\{ \phi \in \Hom_{\cC}(X,Y) \mid f \circ \phi = \phi = \phi \circ e \}.
\end{displaymath}
We refer to \cite[\S 2.6]{ComesWilson} for details. We typically write $eX$ for the object $(X,e)$ of $\cC^{\rm kar}$, and omit $e$ when $e=1$.

The \defn{additive--Karoubi envelope} of $\cC$, denoted $\cC^{\sharp}$, is $(\cC^{\oplus})^{\rm kar}$. This category is $k$-linear, additive, and Karoubian (all idempotents split).

Suppose now that $\cC$ carries a $k$-bilinear symmetric monoidal structure. One easily sees that this induces such a structure on $\cC^{\oplus}$, $\cC^{\rm kar}$, and $\cC^{\sharp}$. In this way, $\cC^{\oplus}$ and $\cC^{\sharp}$ are tensor categories (in the sense of Definition~\ref{def:tensor}). The natural functors from $\cC$ to these categories are symmetric monoidal. Thus if $X$ is a rigid object of $\cC$ then it is also a rigid object in these categories. Since direct sums and summands of rigid objects are rigid, it follows that if $\cC$ is a rigid category then each of $\cC^{\oplus}$, $\cC^{\rm kar}$, and $\cC^{\sharp}$ are rigid as well.

\subsection{Semisimplicity}

We require the following criterion for semisimplicity of Karoubi envelopes; this is certainly well-known, but for the sake of completeness we include a proof.

\begin{proposition} \label{prop:kar}
Let $\cC$ be a $k$-linear additive category in which all endomorphism algebras are finite dimensional and semi-simple. Then the Karoubi envelope $\cC^{\rm kar}$ is a semi-simple abelian category.
\end{proposition}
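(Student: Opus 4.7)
The plan is to exhibit $\cC^{\rm kar}$ as a direct sum $\bigoplus_\lambda \Vec^{\rf}_k$ indexed by the isomorphism classes of a distinguished family of ``simple'' objects; since any such direct sum is a semi-simple abelian category, this will suffice.

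The first step is to observe that every endomorphism algebra in $\cC^{\rm kar}$ is finite-dimensional and semi-simple. Indeed, $\End_{\cC^{\rm kar}}((X,e)) = e\End_{\cC}(X)e$ is a corner of $\End_{\cC}(X)$, and corners of finite-dimensional semi-simple algebras are semi-simple. Combined with $k$ being algebraically closed, Wedderburn then lets me decompose the identity of any such corner as a sum $e_1 + \cdots + e_n$ of primitive orthogonal idempotents with $e_i\End_{\cC}(X)e_i = k$. Since $\cC^{\rm kar}$ is Karoubian, this yields a splitting $(X,e) = \bigoplus_i (X,e_i)$ in which each summand has endomorphism algebra $k$; I will call such summands \emph{simple}. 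Every object is therefore a finite direct sum of simples.

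The second step is a Schur-type dichotomy: given two simples $S, T$, either $\Hom(S,T) = \Hom(T,S) = 0$, or $S \cong T$ and both Hom spaces are one-dimensional. To see this, apply step one to $S \oplus T$: its endomorphism algebra $A$ is finite-dimensional semi-simple, so by Wedderburn $A \cong \prod_i M_{n_i}(k)$. The projections $p, q$ onto $S, T$ are orthogonal idempotents summing to $\id$; they are primitive in $A$ because $pAp = \End(S) = k$ admits only trivial idempotents, and likewise for $q$. Each of $p, q$ therefore sits inside a single matrix block. If $p, q$ lie in different blocks then $pAq = qAp = 0$, giving the first alternative. If they lie in the same block $M_n(k)$, then any two rank-one idempotents in $M_n(k)$ are conjugate, so some $u \in M_n(k)^\times$ satisfies $upu^{-1} = q$; the elements $up \in qAp$ and $pu^{-1} \in pAq$ are mutually inverse, yielding $S \cong T$, and a direct dimension count in $M_n(k)$ shows $qAp$ is one-dimensional.

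Once the dichotomy is in hand, choosing a set of isomorphism-class representatives $\{S_\lambda\}$ for the simples produces an equivalence of $k$-linear categories $\cC^{\rm kar} \simeq \bigoplus_\lambda \Vec^{\rf}_k$, sending $\bigoplus_\lambda S_\lambda^{n_\lambda}$ to the tuple $(k^{n_\lambda})_\lambda$. The target is visibly semi-simple abelian, so the proposition follows. I expect the Schur step to be the main technical hurdle, since one must extract the structural dichotomy from the single hypothesis that $\End(S \oplus T)$ is semi-simple; the crucial leverage comes from the fact that the component projections are automatically primitive in the larger algebra, together with the matrix-algebra form of Wedderburn over an algebraically closed field.
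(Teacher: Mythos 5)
Your proof is correct and follows the same overall skeleton as the paper's: first show that all endomorphism algebras in $\cC^{\rm kar}$ are finite-dimensional semi-simple, then use algebraic closedness of $k$ to split the identity into primitive idempotents and decompose every object into pieces with endomorphism algebra $k$, then establish a Schur-type dichotomy. The two proofs diverge precisely at the Schur step, which you correctly identify as the crux. You invoke the explicit Wedderburn form $\End(S \oplus T) \cong \prod_i M_{n_i}(k)$ and locate the projections onto $S$ and $T$ inside matrix blocks, splitting into the ``same block'' and ``different blocks'' cases. The paper instead uses non-degeneracy of the trace pairing on the semi-simple algebra $R = \End(X \oplus Y)$: since $\Hom(X,Y)$ is orthogonal to $\End(X)$ and $\End(Y)$ under the trace form, any non-zero $\phi \colon X \to Y$ between indecomposables admits $\psi \colon Y \to X$ with $\tr(\phi\psi) \neq 0$, forcing $\phi\psi$ and $\psi\phi$ to be non-zero (hence invertible) endomorphisms, so $\phi$ is an isomorphism. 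The trace argument is slightly slicker and avoids the case analysis on block placement, while your Wedderburn argument is more concrete and gives for free the one-dimensionality of $\Hom(S,T)$ when $S \cong T$ (which the paper also gets, but only implicitly, by pre-composing with an inverse). Both hinge on semi-simplicity of the endomorphism algebra of the direct sum in the same essential way. You are also somewhat more explicit than the paper in exhibiting the equivalence $\cC^{\rm kar} \simeq \bigoplus_\lambda \Vec^{\rf}_k$, which cleanly delivers the ``abelian'' half of the conclusion that the paper leaves tacit.
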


\begin{proof}
We first claim that all endomorphism algebras in $\cC^{\rm kar}$ are semi-simple. Thus let $X$ be a given object of $\cC^{\rm kar}$. By definition, there is an object $Y$ of $\cC^{\rm kar}$ such that $X \oplus Y$ is isomorphic to an object of $\cC$, and so $R=\End(X \oplus Y)$ is semi-simple. If $e \in R$ is the projector onto $X$ then $\End(X)=eRe$, and is thus semi-simple; note that $eRe=\End_R(Re)$ and $Re$ is a semi-simple $R$-module.

Since endomorphism algebras in $\cC^{\rm kar}$ are finite dimensional, every object of $\cC^{\rm kar}$ decomposes into a finite direct sum of indecomposable objects. If $X$ is indecomposable then $\End(X)$ is a finite dimensional semi-simple algebra with no idempotents, and thus $\End(X)=k$. In particular, a non-zero endomorphism of $X$ is an isomorphism.

Let $\phi \colon X \to Y$ be a non-zero map of indecomposable objects. We claim that $\phi$ is an isomorphism. Since $R=\End(X \oplus Y)$ is semi-simple, there is an element that has non-zero trace pairing with $\phi$. Since $\Hom(X,Y)$ is orthogonal to $\End(X)$ and $\End(Y)$ under the trace pairing, it follows that there is some $\psi \colon Y \to X$ such that $\tr(\phi \psi) \ne 0$. Since the trace pairing is symmetrical, it follows that $\tr(\psi \phi) \ne 0$ too. Thus $\phi \psi$ and $\psi \phi$ are non-zero endomorphisms of $X$ and $Y$, and therefore isomorphisms. Thus $\phi$ is an isomorphism, as claimed.

It now follows that $\cC^{\rm kar}$ is semi-simple. Its simple objects are exactly its indecomposable objects.
\end{proof}

\section{Main results}

\subsection{Kuhn's theorem}

Fix a finite field $\bF$ and let $\cE=\Vec^{\rf}_{\bF}$ be the category of finite dimensional $\bF$-vector spaces. For non-negative integers $n_1, \ldots, n_r$, put
\begin{displaymath}
R_{n_1,\ldots,n_r}(\bF) = \End_{k[\cE]^{\oplus}}([\bF^{n_1}] \oplus \cdots \oplus [\bF^{n_r}]).
\end{displaymath}
This is a finite dimensional $k$-algebra.

We now give a direct description of this algebra. For non-negative integers $n$ and $m$, let $\rM_{n,m}(\bF)$ be the set of $n \times m$ matrices with entries in $\bF$. We have a map
\begin{displaymath}
\rM_{n,m}(\bF) \times \rM_{m,\ell}(\bF) \to \rM_{n,\ell}(\bF)
\end{displaymath}
given by matrix multiplication, which leads to a $k$-linear map
\begin{displaymath}
k[\rM_{n,m}(\bF)] \otimes k[\rM_{m,\ell}(\bF)] \to k[\rM_{n,\ell}(\bF)],
\end{displaymath}
We can describe $R_{n_1, \ldots, n_r}(\bF)$ as the space of $r \times r$ matrices whose $(i,j)$ entry belongs to $k[\rM_{n_i,n_j}(\bF)]$. The multiplication is matrix multiplication, using the above map to multiply individual entries.

The following is the key input needed for our main results.

\begin{theorem}[Kuhn] \label{thm:kuhn}
The algebra $R_{n_1,\ldots,n_r}(\bF)$ is semi-simple.
\end{theorem}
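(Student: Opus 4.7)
The plan is to realize $R_{n_1,\ldots,n_r}(\bF)$ as an idempotent corner of the monoid algebra $k[\rM_N(\bF)]$ for $N=n_1+\cdots+n_r$, and then reduce to the semi-simplicity of the latter (which is the content of Kov\'acs's theorem, the $r=1$ special case on which Kuhn's work builds).

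First, decompose $\bF^N = \bF^{n_1}\oplus\cdots\oplus\bF^{n_r}$, and let $p_i \in \rM_N(\bF)$ be the block-diagonal projector onto $\bF^{n_i}$. The $p_i$ are pairwise orthogonal idempotents summing to $1$ in the monoid $\rM_N(\bF)$, so the basis elements $[p_i] \in k[\rM_N(\bF)]$ are pairwise orthogonal idempotents in the monoid algebra (although $\sum_i [p_i] \ne [1]$, since the $k$-linear structure on $k[\rM_N(\bF)]$ is unrelated to the $\bF$-linear structure on $\rM_N(\bF)$). Set $e = [p_1]+\cdots+[p_r]$.

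Next I would identify $R_{n_1,\ldots,n_r}(\bF)$ with the corner $e \cdot k[\rM_N(\bF)] \cdot e$. For $A \in \rM_N(\bF)$ one has $[p_i][A][p_j] = [p_i A p_j]$, and this basis element depends only on the $(i,j)$-block of $A$; the $k$-span of all such elements is therefore isomorphic to $\bigoplus_{i,j} k[\rM_{n_i,n_j}(\bF)]$. The composition rule $[p_i A p_j]\cdot[p_k B p_\ell] = \delta_{jk}\,[p_i(AB)p_\ell]$ matches matrix multiplication in $R_{n_1,\ldots,n_r}(\bF)$, so this is an algebra isomorphism.

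The substantive input is then Kov\'acs's theorem: the finite-dimensional $k$-algebra $k[\rM_N(\bF)]$ is semi-simple (indeed, Morita equivalent to $\bigoplus_{s=0}^N k[\GL_s(\bF)]$, a product of finite group algebras in characteristic $0$). This is the main obstacle---the rest is formal. Given semi-simplicity of $A := k[\rM_N(\bF)]$, any corner $eAe$ is semi-simple, because $eAe \cong \End_A(Ae)^{\mathrm{op}}$ and the $A$-module $Ae$ is a finite direct sum of simples, so its endomorphism algebra is a product of matrix algebras over division rings. Applying this with our $e$ yields semi-simplicity of $R_{n_1,\ldots,n_r}(\bF)$.
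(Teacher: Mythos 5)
Your key identification does not work as written. In the monoid algebra $k[\rM_N(\bF)]$ the basis elements $[p_i]$ are \emph{not} pairwise orthogonal idempotents: for $i\ne j$ one has $[p_i][p_j]=[p_ip_j]=[0]$, where $0$ is the zero matrix of $\rM_N(\bF)$, and the basis element $[0]$ is a \emph{nonzero} vector of the monoid algebra. (You are rightly careful about this phenomenon when noting $\sum_i[p_i]\ne[1]$, but the same phenomenon kills orthogonality.) In particular $e=\sum_i[p_i]$ is not an idempotent once $r\ge 2$: one computes $e^2=e+r(r-1)[0]$. Likewise the composition rule you write, $[p_iAp_j]\cdot[p_kBp_\ell]=\delta_{jk}[p_i(AB)p_\ell]$, fails for $j\ne k$, where the left-hand side is $[0]$ rather than $0$. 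There is also a deeper obstruction: the $r^2$ distinct ``zero'' basis vectors of $R_{n_1,\ldots,n_r}(\bF)$ (one in each matrix slot, since each $k[\rM_{n_i,n_j}(\bF)]$ contains a basis element $[0]$) would all be sent to the single element $[0]\in k[\rM_N(\bF)]$, so the proposed map cannot be injective.

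The strategy is salvageable if you pass to the $r\times r$ matrix algebra $A$ over $k[\rM_N(\bF)]$, rather than $k[\rM_N(\bF)]$ itself. Take $e=\mathrm{diag}\bigl([p_1],\ldots,[p_r]\bigr)\in A$; since each $[p_i]$ is an idempotent this $e$ genuinely is one. The corner $eAe$ has $(i,j)$-entry $[p_i]\,k[\rM_N(\bF)]\,[p_j]$, which is freely spanned by the $q^{n_in_j}$ elements $[M]$ with $M\in\rM_N(\bF)$ supported in the $(i,j)$-block; the different slots now keep their ``zero'' elements separate, and the multiplication matches that of $R_{n_1,\ldots,n_r}(\bF)$ exactly. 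Since $k[\rM_N(\bF)]$ is semi-simple by Kov\'acs, so is $A$, and corners of semi-simple algebras are semi-simple by exactly the $\End_A(Ae)^{\mathrm{op}}$ argument you give. The repaired argument is genuinely different from the paper's proof, which instead invokes Kuhn's semi-simplicity of the entire functor category $\mathrm{Fun}(\Vec_\bF^{\rf},\Vec_k)$ and identifies $R_{n_1,\ldots,n_r}(\bF)$ with $\End(P_{n_1}\oplus\cdots\oplus P_{n_r})$ for the representable functors $P_n$. Your route uses only the $r=1$ case (Kov\'acs) as external input, which is more elementary, at the price of explicit idempotent bookkeeping; the paper's route bypasses idempotent manipulations entirely.
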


\begin{proof}
Let $\cC$ be the category of all functors $\cE \to \Vec_k$. By \cite[Corollary~1.3]{Kuhn2}, this is a semi-simple Grothendieck abelian category. Thus if $M$ is an object of $\cC$ such that $\End_{\cC}(M)$ is finite dimensional then $M$ has finite length and $\End_{\cC}(M)$ is a semi-simple algebra. Let $P_n \colon \cE \to \Vec_k$ be the functor defined by $P_n(V)=k[\Hom_{\cE}(\bF^n, V)]$. By Yoneda's lemma, we have
\begin{displaymath}
\Hom_{\cC}(P_n,P_m)=k[\Hom_{\cE}(\bF^m, \bF^n)]
\end{displaymath}
(see also \cite[\S 2.2]{Kuhn2}), and so
\begin{displaymath}
R_{n_1,\ldots,n_r}(\bF) = \End_{\cC}(P_{n_1} \oplus \cdots \oplus P_{n_r}).
\end{displaymath}
Thus $R_{n_1,\ldots,n_r}(\bF)$ is semi-simple.
\end{proof}

\begin{remark}
When $r=1$, the ring $R_n(\bF)$ is simply the monoid algebra of $\rM_{n,n}(\bF)$. See \cite{Kovacs} or \cite[Corollary~2.7]{Kuhn2} for the precise structure of this algebra.
\end{remark}

\subsection{Linearizing semi-simple categories}

Fix a semi-simple abelian category $\cE$ in which all $\Hom$ spaces are finite abelian groups. We now investigate the structure of $k[\cE]^{\sharp}$. The following is our main result in this direction.

\begin{theorem} \label{thm:lin-ss}
The category $k[\cE]^{\sharp}$ is a semi-simple abelian category.
\end{theorem}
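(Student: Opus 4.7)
The plan is to apply Proposition~\ref{prop:kar} to $\cC = k[\cE]^{\oplus}$: I must show that every endomorphism algebra in $k[\cE]^{\oplus}$ is finite-dimensional and semi-simple. Finite-dimensionality is automatic, since the endomorphism algebra of $\bigoplus_{i=1}^{r}[X_i]$ is the $r \times r$ matrix algebra whose $(i,j)$ entry is $k[\Hom_{\cE}(X_j,X_i)]$, and Hom sets in $\cE$ are finite by hypothesis. The real content is semi-simplicity.

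For this, I would reduce to Kuhn's theorem (Theorem~\ref{thm:kuhn}) via the decomposition of $\cE$ into isotypic components. Let $\Lambda$ be the finite set of isomorphism classes of simple objects of $\cE$ appearing as summands of any $X_i$. By Schur's lemma plus Wedderburn's little theorem, each $\bF_{\lambda} := \End_{\cE}(S_{\lambda})$ is a finite field, and $\Hom_{\cE}(S_{\lambda},S_{\mu}) = 0$ for $\lambda \ne \mu$. Passing to a full subcategory leaves the relevant endomorphism algebra unchanged, so we may assume $\cE \simeq \prod_{\lambda \in \Lambda} \Vec^{\rf}_{\bF_{\lambda}}$ as additive categories. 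Writing $X_i = \bigoplus_{\lambda} V_{\lambda}^{(i)}$ with $V_{\lambda}^{(i)} \cong \bF_{\lambda}^{n_{i,\lambda}}$, the identification $\Hom_{\cE}(X_j,X_i) = \prod_{\lambda} \Hom_{\bF_{\lambda}}(V_{\lambda}^{(j)},V_{\lambda}^{(i)})$ linearizes to $k[\Hom_{\cE}(X_j,X_i)] = \bigotimes_{\lambda} k[\Hom_{\bF_{\lambda}}(V_{\lambda}^{(j)},V_{\lambda}^{(i)})]$. Kuhn's theorem then ensures that each factor $R_{\lambda} := R_{n_{1,\lambda},\ldots,n_{r,\lambda}}(\bF_{\lambda})$ is semi-simple, so the finite tensor product $R' := \bigotimes_{\lambda \in \Lambda} R_{\lambda}$ is a semi-simple finite-dimensional $k$-algebra (using that $k$ is algebraically closed).

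The last step is to identify the algebra $R := \End_{k[\cE]^{\oplus}}(\bigoplus_{i}[X_i])$ with a corner of $R'$. Let $e_i^{(\lambda)} \in R_{\lambda}$ be the idempotent projecting onto the summand $[V_{\lambda}^{(i)}]$, and set $d := \sum_{i=1}^{r} \bigotimes_{\lambda} e_i^{(\lambda)} \in R'$. Then $d$ is idempotent, and the map sending a matrix unit $E_{ij}((\phi_{\lambda})_{\lambda})$ of $R$ to the tensor product $\bigotimes_{\lambda} E^{(\lambda)}_{ij}(\phi_{\lambda})$ defines an algebra isomorphism $R \cong d R' d$; conjugation by $d$ precisely enforces that in $R$ the matrix indices $(i,j)$ must be common to all $\lambda$, whereas in $R'$ they may vary independently. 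Since corners of finite-dimensional semi-simple algebras are semi-simple ($d R' d \cong \End_{R'}(R'd)^{\mathrm{op}}$ with $R'd$ a semi-simple $R'$-module), $R$ is semi-simple. Proposition~\ref{prop:kar} then yields the theorem. I expect the main technical point to be the verification of this diagonal-corner identification, which is the device that allows Kuhn's theorem, proved for a single finite field, to be combined across the potentially distinct residue fields $\bF_{\lambda}$.
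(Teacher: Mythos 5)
Your proof is correct and is essentially the paper's argument: the paper also reduces to Kuhn's theorem on isotypic pieces and then recombines via the ``diagonal corner'' mechanism, packaged there as two lemmas about Segre products of generalized matrix algebras (Lemma~\ref{lem:lin-ss-2} identifies $\End_{k[\cE]^{\oplus}}(X)$ with the iterated Segre product of the isotypic endomorphism algebras, and Lemma~\ref{lem:lin-ss-1} proves such a Segre product semi-simple by exhibiting it as $e(A\otimes B)e$, exactly your $dR'd$). You have simply inlined those lemmas into a single computation.
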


We prove the theorem after some preparatory material. A \defn{generalized matrix algebra} of order $n$ is a finite dimensional $k$-algebra $A$ endowed with a decomposition $A=\bigoplus_{1 \le i,j \le n} A_{i,j}$, as a $k$-vector space, such that $A_{i,j} A_{j,\ell} \subset A_{i,\ell}$ and $A_{i,j} A_{r,s}=0$ if $j \ne r$. Suppose that $A$ and $B$ are two generalized matrix algebras of the same order $n$. We define the \defn{Segre product} of $A$ and $B$, denoted $A \boxtimes B$, to be the generalized matrix algebra of order $n$ with
\begin{displaymath}
(A \boxtimes B)_{i,j} = A_{i,j} \otimes B_{i,j},
\end{displaymath}
and the obvious multiplication law.

\begin{lemma} \label{lem:lin-ss-1}
If $A$ and $B$ as above are both semi-simple then so is $A \boxtimes B$.
\end{lemma}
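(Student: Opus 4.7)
The plan is to realize $A \boxtimes B$ as an idempotent corner $f(A \otimes B)f$ inside the ordinary tensor product $A \otimes B$, and then invoke two standard facts: the tensor product of two finite-dimensional semi-simple algebras over an algebraically closed field is semi-simple, and any corner $fRf$ of a semi-simple algebra $R$ is semi-simple (being the endomorphism algebra of the semi-simple $R$-module $Rf$, exactly as in the proof of Proposition~\ref{prop:kar}).

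First I would unpack the decomposition of the unit. From $A_{i,j} A_{r,s} = 0$ for $j \ne r$, writing $1_A = \sum_{i,j} u_{i,j}$ with $u_{i,j} \in A_{i,j}$ and multiplying on either side by an element of $A_{p,q}$ forces $u_{i,j} = 0$ for $i \ne j$. Setting $e_i := u_{i,i}$ gives pairwise orthogonal idempotents in $A_{i,i}$ with $\sum_i e_i = 1_A$ and $A_{i,j} = e_i A e_j$. Similarly, write $1_B = \sum_i e'_i$ with $e'_i \in B_{i,i}$ and $B_{i,j} = e'_i B e'_j$.

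Next, set $f := \sum_i e_i \otimes e'_i \in A \otimes B$; orthogonality of the $e_i$ and of the $e'_i$ gives $f^2 = f$. A direct computation shows
\[
f(A \otimes B)f \;=\; \bigoplus_{i,j} (e_i \otimes e'_i)(A \otimes B)(e_j \otimes e'_j) \;=\; \bigoplus_{i,j} A_{i,j} \otimes B_{i,j}
\]
as a subspace of $A \otimes B$. Moreover, if $a \in A_{i,j}$, $b \in B_{i,j}$, $a' \in A_{p,\ell}$, $b' \in B_{p,\ell}$, then the product $(a \otimes b)(a' \otimes b') = aa' \otimes bb'$ in $A \otimes B$ vanishes unless $p = j$ (by the block-vanishing in $A$ and $B$), in which case it lies in $A_{i,\ell} \otimes B_{i,\ell}$; this is precisely the Segre-product multiplication. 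Hence $A \boxtimes B \cong f(A \otimes B)f$ as $k$-algebras.

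Finally, since $k$ is algebraically closed, the semi-simple finite-dimensional algebras $A$ and $B$ are each isomorphic to a product of matrix algebras over $k$, so $A \otimes B$ is again such a product and is semi-simple. Semi-simplicity of $f(A \otimes B)f$ then follows from the corner-algebra argument recalled above. The only real bookkeeping obstacle is the verification that $f(A \otimes B)f$ agrees with $A \boxtimes B$ on the nose as an algebra; this is the step worth doing carefully, while everything else is formal.
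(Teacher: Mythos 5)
Your proof is correct and matches the paper's argument essentially step for step: decompose the identities of $A$ and $B$ into orthogonal diagonal idempotents, form the idempotent $f=\sum_i e_i\otimes e_i'$ in $A\otimes B$, identify $A\boxtimes B$ with the corner $f(A\otimes B)f$, and conclude by semi-simplicity of $A\otimes B$ and of corner algebras. You spell out a few details the paper leaves implicit (the vanishing of off-diagonal components of the unit, and the role of $k$ being algebraically closed in ensuring $A\otimes B$ is semi-simple), but the approach is the same.
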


\begin{proof}
Let $a$ be the identity element of $A$. This only has diagonal entries, meaning $a=\sum_{i=1}^n a_i$ with $a_i \in A_{i,i}$; moreover, the $a_i$'s are orthogonal idempotents. Similarly $b=\sum_{i=1}^n b_i$, where $b$ is the identity of $B$. Let $C=A \otimes B$, which is a semi-simple algebra, and let $e=\sum_{1 \le i \le n} a_i \otimes b_i$, which is an idempotent. One easily sees that $A \boxtimes B$ is identified with $eCe=\End_C(Ce)$, and is thus semi-simple.
\end{proof}

\begin{lemma} \label{lem:lin-ss-2}
Let $X=[X_1] \oplus \cdots \oplus [X_n]$ be an object of $k[\cE]^{\oplus}$. Suppose we have a decomposition $X_i=Y_i \oplus Z_i$ in $\cE$ for each $i$, such that $Y_i$ and $Z_j$ have no common simple constituents for all $i$ and $j$. Let $Y=[Y_1] \oplus \cdots \oplus [Y_n]$ and $Z=[Z_1] \oplus \cdots \oplus [Z_n]$. Then $\End_{k[\cE]^{\oplus}}(X)$ is the Segre product of $\End_{k[\cE]^{\oplus}}(Y)$ and $\End_{k[\cE]^{\oplus}}(Z)$.
\end{lemma}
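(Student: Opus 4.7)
The plan is to unfold the definition of $\End_{k[\cE]^{\oplus}}(X)$ as a generalized matrix algebra and verify directly that it agrees with the Segre product of the two analogous algebras for $Y$ and $Z$. The category $k[\cE]^{\oplus}$ is additive with matrix composition, so $\End_{k[\cE]^{\oplus}}(X)$ is a generalized matrix algebra of order $n$ with $(i,j)$-block equal to $\Hom_{k[\cE]}([X_i],[X_j]) = k[\Hom_{\cE}(X_i,X_j)]$, and similarly for $Y$ and $Z$. The whole question thus reduces to identifying the $(i,j)$-blocks and checking that composition matches.

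The key observation is that the hypothesis on common simple constituents, combined with semisimplicity of $\cE$, forces $\Hom_{\cE}(Y_i, Z_j) = 0$ and $\Hom_{\cE}(Z_i, Y_j) = 0$ for all $i,j$. Therefore every morphism $X_i \to X_j$ in $\cE$ is of the form $\phi \oplus \psi$ with $\phi \in \Hom_{\cE}(Y_i, Y_j)$ and $\psi \in \Hom_{\cE}(Z_i, Z_j)$, and this gives a bijection of sets
\begin{displaymath}
\Hom_{\cE}(X_i, X_j) \;\cong\; \Hom_{\cE}(Y_i, Y_j) \times \Hom_{\cE}(Z_i, Z_j).
\end{displaymath}
Linearizing, and using that $k[S \times T] = k[S] \otimes k[T]$ for finite sets $S$ and $T$, we obtain a canonical isomorphism of $k$-vector spaces
\begin{displaymath}
\Hom_{k[\cE]}([X_i],[X_j]) \;\cong\; \Hom_{k[\cE]}([Y_i],[Y_j]) \otimes \Hom_{k[\cE]}([Z_i],[Z_j]).
\end{displaymath}
This is exactly the $(i,j)$-block of the Segre product $\End(Y) \boxtimes \End(Z)$.

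It remains to check compatibility of the multiplication. Composition in $\cE$ respects the decomposition $\phi \oplus \psi$ in the obvious way: $(\phi' \oplus \psi') \circ (\phi \oplus \psi) = (\phi' \circ \phi) \oplus (\psi' \circ \psi)$. After linearizing, the basis element $[\phi \oplus \psi]$ of $\Hom_{k[\cE]}([X_i],[X_j])$ corresponds to $[\phi] \otimes [\psi]$, and the block-by-block composition in the generalized matrix algebra $\End_{k[\cE]^{\oplus}}(X)$ translates into the componentwise product on tensor factors, which is precisely the multiplication law of the Segre product. I expect no real obstacle here; the whole argument is a direct bookkeeping verification, with the only substantive input being the vanishing of cross-Homs coming from the hypothesis on common simple constituents.
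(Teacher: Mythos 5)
Your proposal is correct and follows essentially the same route as the paper: identify the $(i,j)$-blocks, use the vanishing of $\Hom_{\cE}(Y_i,Z_j)$ and $\Hom_{\cE}(Z_i,Y_j)$ to get $k[\Hom_{\cE}(X_i,X_j)] \cong k[\Hom_{\cE}(Y_i,Y_j)] \otimes k[\Hom_{\cE}(Z_i,Z_j)]$ via $k[S\times T]=k[S]\otimes k[T]$, and check multiplicativity. The paper leaves the multiplicativity check as ``one easily verifies''; you spell it out slightly more, but the argument is the same.
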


\begin{proof}
Let $A$, $B$, and $C$ be the endomorphism algebras of $X$, $Y$, and $Z$ in $k[\cE]^{\oplus}$. We note that each of these are naturally generalized matrix algebras of order $n$, and so it makes sense to speak of the Segre product of $B$ and $C$. We have
\begin{align*}
A_{i,j} &= \Hom_{k[\cE]}([X_i], [X_j])
= k[\Hom_{\cE}(X_i,X_j)] \\
&= k[\Hom_{\cE}(Y_i,Y_j) \oplus \Hom_{\cE}(Z_i,Z_j)]
= B_{i,j} \otimes C_{i,j}
\end{align*}
In the third step we used that $\Hom_{\cE}(Y_i,Z_j)$ and $\Hom_{\cE}(Z_i,Y_j)$ vanish, which follows since $Y_i$ and $Z_j$ have no common constituents. We thus see that $A$ is identified with $B \boxtimes C$, and one easily verifies that this identification is compatible with the algebra structures.
\end{proof}

\begin{lemma} \label{lem:lin-ss-3}
Let $X=[X_1] \oplus \cdots \oplus [X_n]$ be an object of $k[\cE]^{\oplus}$. Suppose there is a simple object $L$ of $\cE$ such that each $X_i$ is $L$-isotypic; say $X_i \cong L^{\oplus m_i}$. Let $\bF=\End_{\cE}(L)$. Then $\End_{k[\cE]^{\oplus}}(X)$ is isomorphic to $R_{m_1,\ldots,m_n}(\bF)$.
\end{lemma}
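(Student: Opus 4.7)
The plan is to reduce the claim to a straightforward equivalence of full subcategories. Let $\cE_0$ denote the full subcategory of $\cE$ spanned by $\{X_1, \ldots, X_n\}$, and let $\cE'_0$ denote the full subcategory of $\Vec^{\rf}_{\bF}$ spanned by $\{\bF^{m_1}, \ldots, \bF^{m_n}\}$. I claim there is an equivalence of categories $\Phi \colon \cE_0 \to \cE'_0$ sending $X_i$ to $\bF^{m_i}$.

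To construct $\Phi$, first fix isomorphisms $\sigma_i \colon X_i \to L^{\oplus m_i}$ in $\cE$ for each $i$. Because $L$ is simple with $\End_{\cE}(L) = \bF$, every morphism $L^{\oplus m_i} \to L^{\oplus m_j}$ is given by an $m_j \times m_i$ matrix of elements of $\bF$, with composition on either side corresponding to matrix multiplication; this identifies $\Hom_{\cE}(L^{\oplus m_i}, L^{\oplus m_j})$ bijectively with $\Hom_{\Vec_{\bF}}(\bF^{m_i}, \bF^{m_j})$ in a way that converts composition in $\cE$ to composition of $\bF$-linear maps. Conjugating by the $\sigma_i$'s then yields the desired equivalence $\Phi$.

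Since the passage $\cD \mapsto k[\cD]^{\oplus}$ is functorial in $\cD$, the equivalence $\Phi$ induces a $k$-linear equivalence $k[\cE_0]^{\oplus} \to k[\cE'_0]^{\oplus}$ that sends $[X_i]$ to $[\bF^{m_i}]$. Taking the endomorphism algebra of $[X_1] \oplus \cdots \oplus [X_n]$ thus produces a $k$-algebra isomorphism
\[
\End_{k[\cE]^{\oplus}}\bigl([X_1] \oplus \cdots \oplus [X_n]\bigr) \to \End_{k[\Vec^{\rf}_{\bF}]^{\oplus}}\bigl([\bF^{m_1}] \oplus \cdots \oplus [\bF^{m_n}]\bigr) = R_{m_1, \ldots, m_n}(\bF),
\]
which is precisely the claim. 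There is essentially no substantive obstacle; the only thing to watch is the consistent choice of the orientation in the identification $\Hom_{\cE}(L^{\oplus a}, L^{\oplus b}) \leftrightarrow \rM_{b,a}(\bF)$, which must be made uniformly so that composition on the two sides is converted to a single matrix multiplication convention throughout.
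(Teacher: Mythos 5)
Your argument is correct and is essentially the paper's own proof, just with the details filled in: the paper simply observes that the full subcategory of $L$-isotypic objects in $\cE$ is equivalent to $\Vec_{\bF}^{\rf}$, and you have made that equivalence (and the resulting identification of endomorphism algebras after linearization) explicit.
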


\begin{proof}
This is essentially the definition of $R_{m_1,\ldots,m_n}(\bF)$: note that the full subcategory of $\cE$ spanned by $L$-isotypic objects is equivalent to $\Vec_{\bF}^{\rf}$.
\end{proof}

\begin{remark}
The ring $\bF$ in Lemma~\ref{lem:lin-ss-3} is a finite field by Wedderburn's ``little theorem.''
\end{remark}

\begin{proof}[Proof of Theorem~\ref{thm:lin-ss}]
Let $X$ be an object of $k[\cE]^{\oplus}$ and put $R=\End_{k[\cE]^{\oplus}}(X)$. Write $X=[X_1] \oplus \cdots \oplus [X_n]$ for objects $X_1, \ldots, X_n$ of $\cE$. Let $L_1, \ldots, L_m$ be the simple objects of $\cE$ appearing in $X_1, \ldots, X_n$, and let $X_i(j)$ be the $L_j$-isotypic piece of $X_i$. Put $X(j)=[X_1(j)] \oplus \cdots \oplus [X_n(j)]$ and $R(j)=\End_{k[\cE]^{\oplus}}(X(j))$. By Theorem~\ref{thm:kuhn} and Lemma~\ref{lem:lin-ss-3}, the algebra $R(j)$ is semi-simple. By Lemma~\ref{lem:lin-ss-2}, we have $R=R(1) \boxtimes \cdots \boxtimes R(m)$. Thus $R$ is semi-simple by Lemma~\ref{lem:lin-ss-1}. We thus see that all endomorphism algebras in $k[\cE]^{\oplus}$ are semi-simple, and so $k[\cE]^{\sharp}$ is semi-simple by Proposition~\ref{prop:kar}.
\end{proof}

\begin{remark}
Let $\{L_i\}_{i \in I}$ be representatives for the isomorphism classes of simple objects in $\cE$, and put $\bF_i=\End_{\cE}(L_i)$. One can show, using \cite[Theorem~1.1]{Kuhn2}, that the simple objects of $k[\cE]^{\sharp}$ correspond to tuples $(\rho_i)_{i \in I}$ where $\rho_i$ is an irreducible $k$-representation of $\GL_{n(i)}(\bF_i)$, for some $n(i) \in \bN$, such that $n(i)=0$ for all but finitely many $i$. It does not seem obvious how to describe the tensor product of simples in $k[\cE]^{\sharp}$.
\end{remark}

\subsection{Some idempotents}

Suppose now that $\cE$ is an $\bF$-linear semi-simple abelian category, for some finite field $\bF$, and that all $\Hom$ spaces in $\cE$ are finite dimensional. Theorem~\ref{thm:lin-ss} implies that $k[\cE]^{\sharp}$ is semi-simple. This category has a natural $k[\bF]$-linear structure, where $k[\bF]=R_1(\bF)$ is the monoid algebra of $\bF$ under multiplication. We now examine how to decompose $k[\cE]^{\sharp}$ using idempotents of this algebra.

For $a \in \bF$, let $[a]$ be the corresponding element of the monoid algebra $k[\bF]$. Let $e_0=[0]$, and for a character $\chi \colon \bF^{\times} \to k^{\times}$ put
\begin{displaymath}
e_{\chi}=-\delta_{\chi}[0]+\frac{1}{q-1} \sum_{a \in \bF^{\times}} \chi^{-1}(a) [a]
\end{displaymath}
where $\delta_{\chi}$ is~1 if $\chi$ is the trivial character, and~0 otherwise. One readily verifies that $e_0$ and the $e_{\chi}$ are the primitive idempotents of $k[\bF]$.

If $X$ is an object of $k[\cE]^{\sharp}$, then we have a decomposition
\begin{displaymath}
X = e_0 X \oplus \bigoplus_{\chi} e_{\chi} X.
\end{displaymath}
Define $k[\cE]^{\sharp}_{\chi}$ to be the full subcategory of $k[\cE]^{\sharp}$ on objects $X$ such that $X=e_{\chi} X$, and similarly define $k[\cE]^{\sharp}_0$. These are each $k$-linear semi-simple abelian categories. The above formula gives a decomposition
\begin{equation} \label{eq:decomp}
k[\cE]^{\sharp} = k[\cE]^{\sharp}_0 \oplus \bigoplus_{\chi} k[\cE]^{\sharp}_{\chi}.
\end{equation}
Let $\bzero$ be the zero object of $\cE$. We have $\End_{k[\cE]^{\sharp}}([\bzero])=k$, and so $[\bzero]$ is a simple object of $k[\cE]^{\sharp}$. One can show that $k[\cE]^{\sharp}_0$ consists of the $[\bzero]$-isotypic objects.

We now examine the $\Hom$ spaces in $k[\cE]^{\sharp}_{\chi}$. Let $X$ and $Y$ be objects of $k[\cE]^{\sharp}$. We let $k[\bF]$ act on $\Hom_{k[\cE]^{\sharp}}(X,Y)$ through its action on $Y$. A morphism $\phi \colon X \to Y$ maps into $e_{\chi} Y$ if and only if it satisfies $e_{\chi} \phi=\phi$, and it will then factor through $e_{\chi} X$. We thus find
\begin{equation} \label{eq:hom}
\Hom_{k[\cE]^{\sharp}_{\chi}}(e_{\chi} X, e_{\chi} Y) = e_{\chi} \Hom_{k[\cE]^{\sharp}}(X, Y).
\end{equation}
Recall that for a non-negaitve integer $n$, the corresponding \defn{$q$-integer} is defined by 
\begin{displaymath}
[n]_q = \frac{q^n-1}{q-1}.
\end{displaymath}
The following is the main result we are after:

\begin{proposition} \label{prop:Hom-chi}
Let $X$ be an object of $\cE$ and let $Y=e_{\chi} [X]$ be the corresponding object of $\cC=k[\cE]^{\sharp}_{\chi}$. Then $\dim_k \End_{\cC}(Y) = [\dim_{\bF} \End_{\cE}(X)]_q$ where $q=\# \bF$.
\end{proposition}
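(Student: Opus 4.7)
The plan is to translate the question into a representation-theoretic calculation on the $\bF$-vector space $V := \End_\cE(X)$, of dimension $d := \dim_\bF V$. First, applying \eqref{eq:hom} with $X = Y$, I would identify
\begin{displaymath}
\End_\cC(Y) \;=\; e_\chi \cdot \End_{k[\cE]^\sharp}([X]) \;=\; e_\chi \cdot k[V],
\end{displaymath}
using that the sharp envelope does not alter the endomorphism algebra of $[X]$. The action of $k[\bF]$ on $k[V]$ is the linearization of scalar multiplication on $V$, so $[a]\cdot[v] = [a v]$; the problem reduces to computing $\dim_k(e_\chi \cdot k[V])$.

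Next, I would decompose $k[V]$ under the $k[\bF^\times]$-action. As a $\bF^\times$-set, $V$ splits into the fixed point $\{0\}$ together with the punctured lines $\ell \setminus \{0\}$, one for each of the $N := (q^d - 1)/(q - 1) = [d]_q$ lines $\ell$ through the origin in $V$; each such punctured line is a free $\bF^\times$-orbit. Consequently
\begin{displaymath}
k[V] \;\cong\; k\cdot[0] \;\oplus\; k[\bF^\times]^{\oplus N}
\end{displaymath}
as a $k[\bF^\times]$-module, the first summand being the trivial representation and the others the regular representation. Writing $\tilde e_\chi := (q-1)^{-1}\sum_{a\in\bF^\times}\chi^{-1}(a)[a]$ for the usual primitive idempotent in $k[\bF^\times]$ attached to $\chi$, this yields $\dim_k(\tilde e_\chi \cdot k[V]) = N + \delta_\chi$, since each regular summand contributes one and the trivial summand contributes $\delta_\chi$.

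Finally, I would exploit the relation $e_\chi = \tilde e_\chi - \delta_\chi[0]$. For $\chi$ nontrivial this is just $e_\chi = \tilde e_\chi$, giving dimension $N$ directly. For $\chi$ trivial, the endomorphism $[0]$ of $k[V]$ sends every basis vector to $[0]$, hence restricts to the rank-one projection onto $k\cdot[0] \subset \tilde e_1 \cdot k[V]$; one checks $e_1 \tilde e_1 = e_1$, so $e_1$ is an idempotent whose image sits inside the $(N+1)$-dimensional space $\tilde e_1 \cdot k[V]$ with codimension one, yielding dimension $N$ again. In either case $\dim_k \End_\cC(Y) = N = [d]_q$, as required. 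The only genuinely delicate point is the bookkeeping around this $-\delta_\chi[0]$ correction in the trivial-character case; once one recognizes it as a rank-one perturbation of a standard isotypic projector on a single isotypic component, the dimension count is immediate.
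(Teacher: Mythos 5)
Your proof is correct and follows essentially the same route as the paper's: both reduce via \eqref{eq:hom} to computing $\dim_k e_\chi k[V]$ for $V = \End_\cE(X)$, and both exploit the freeness of the $\bF^\times$-action on $V \setminus \{0\}$ to divide by $q-1$. The only difference is in the bookkeeping: the paper works with $(1-e_0)k[V]$, whose basis $\{[v]-[0]\}_{v\neq 0}$ exhibits it directly as a free $k[\bF^\times]$-module (so that all characters, trivial or not, are handled uniformly), whereas you decompose $k[V]$ itself into $k\cdot[0]\oplus k[\bF^\times]^{\oplus N}$ and then patch up the trivial-character case by hand via the $-\delta_\chi[0]$ correction. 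Both arguments are sound; the paper's packaging just sidesteps the case split.
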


\begin{proof}
First suppose that $V$ is an $\bF$-vector space of dimension $d$, and regard $k[V]$ as a $k[\bF]$-module in the natural manner. The space $(1-e_0) k[V]$ has dimension $q^d-1$, with basis $[v]-[0]$ with $v \in V$ non-zero. The group $\bF^{\times}$ acts freely on $V \setminus \{0\}$, and so $(1-e_0) k[V]$ is free as an $k[\bF^{\times}]$-module. Identifying $k[\bF^{\times}]$ with $k[\bF]/(e_0)$, we find that the dimension of $e_{\chi} k[V]$ is independent of $\chi$. Since there are $q-1$ choices for $\chi$, we see that $e_{\chi} k[V]$ has dimension $[q]_d$.

Now, we have
\begin{displaymath}
\End_{\cC}(Y)=e_{\chi} \End_{k[\cE]}([X]) = e_{\chi} k[\End_{\cE}(X)],
\end{displaymath}
where in the first step we used \eqref{eq:hom}. The action of $k[\bF]$ on $k[\End_{\cE}(X)]$ is induced from the $\bF$-linear sturcture on $\End_{\cE}(X)$. Thus the result follows from the first paragraph.
\end{proof}

\subsection{Linearizing pre-Tannakian categories}

Fix a finite field $\bF$, an $\bF$-linear semi-simple pre-Tannakian category $\cE$, and a character $\chi \colon \bF^{\times} \to k^{\times}$. Let $\cC$ be category $k[\cE]^{\sharp}_{\chi}$.

\begin{theorem} \label{thm:lin-pt}
$\cC$ is a $k$-linear semi-simple pre-Tannakian category.
\end{theorem}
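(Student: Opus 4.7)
The plan is to verify the four axioms of Definition~\ref{def:pretan} for $\cC$, using Theorem~\ref{thm:lin-ss} as the structural engine and the idempotent decomposition~\eqref{eq:decomp} to isolate the $\chi$-block. Three of the axioms will be essentially inherited from $k[\cE]^{\sharp}$; the one genuine issue is rigidity, where I must construct duals internal to $\cC$ and verify the snake equations with the shifted unit $\bone_{\cC} = e_{\chi}[\bone_{\cE}]$.

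First I would observe that $\cC$ is a direct summand of $k[\cE]^{\sharp}$ as a $k$-linear additive category, and therefore by Theorem~\ref{thm:lin-ss} is semi-simple abelian. Finite dimensionality of Hom spaces propagates: every Hom space in $k[\cE]^{\sharp}$ is a summand of some $k[\Hom_{\cE}(X,Y)]$, and since $\cE$ is $\bF$-linear with finite dimensional $\bF$-Hom spaces over the finite field $\bF$, these sets are finite. Combined with semi-simplicity this yields finite length. For the unit of $\cC$, I would take $\bone_{\cC} := e_{\chi}[\bone_{\cE}]$; Proposition~\ref{prop:Hom-chi} applied with $X = \bone_{\cE}$ then gives $\dim_{k}\End_{\cC}(\bone_{\cC}) = [\dim_{\bF}\End_{\cE}(\bone_{\cE})]_{q} = [1]_{q} = 1$, which handles the axiom $\End_{\cC}(\bone_{\cC}) = k$.

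The heart of the argument is rigidity. The essential structural fact to isolate first is an \emph{absorption principle}: for $X,Y \in \cE$ and $a \in \bF$, the $\bF$-bilinearity of $\otimes$ in $\cE$ gives $a \cdot \id_{X \otimes Y} = (a \cdot \id_{X}) \otimes \id_{Y} = \id_{X} \otimes (a \cdot \id_{Y})$, which after applying $[-]$ becomes $[a \cdot \id_{X \otimes Y}] = [a \cdot \id_{X}] \uotimes \id_{[Y]} = \id_{[X]} \uotimes [a \cdot \id_{Y}]$. Extending linearly to $k[\bF]$ shows that $e_{\chi}$, viewed as a natural endomorphism of the identity functor of $k[\cE]^{\sharp}$, may be pulled into either tensor factor. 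Consequently $e_{\chi}[X] \uotimes e_{\chi}[X^{\vee}] = e_{\chi}([X] \uotimes [X^{\vee}]) = e_{\chi}[X \otimes X^{\vee}]$; the subcategory $\cC$ is closed under $\uotimes$; and $\bone_{\cC}$ acts as unit, since $\bone_{\cC} \uotimes Y = e_{\chi}([\bone_{\cE}] \uotimes Y) = e_{\chi}Y = Y$ for $Y \in \cC$. Given $Y = e_{\chi}[X]$, I would take $Y^{\vee} := e_{\chi}[X^{\vee}]$, and define $\alpha_{\cC},\beta_{\cC}$ as the restrictions to the $e_{\chi}$-parts of the linearizations $[\alpha],[\beta]$ of the duality data in $\cE$; these are well-defined because the $\bF$-action is natural, so $[\alpha]$ and $[\beta]$ commute with $e_{\chi}$. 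The snake equation for $[X]$ in $k[\cE]^{\sharp}$, conjugated by $e_{\chi}$ on both sides, produces the snake equation for $Y$ in $\cC$, since its right-hand side collapses to $e_{\chi}[\id_{X}] = \id_{Y}$. Because $\cC$ is Karoubian and direct summands of rigid objects are rigid, rigidity of the generating family $\{e_{\chi}[X]\}_{X \in \cE}$ extends to all of $\cC$.

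The step I expect to be the main obstacle is this rigidity bookkeeping --- in particular, the clean identification $e_{\chi}[X] \uotimes e_{\chi}[X^{\vee}] = e_{\chi}[X \otimes X^{\vee}]$ and the check that $e_{\chi}[\bone_{\cE}]$ (and not $[\bone_{\cE}]$) is the correct unit for the induced tensor on $\cC$. Once the absorption principle is established as a short lemma, the snake equations and all remaining coherences follow formally from the $\bF$-equivariance of morphisms in the image of the functor $[-]$.
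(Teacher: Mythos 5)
Your proof is correct and follows essentially the same route as the paper: both verify the four pre-Tannakian axioms using Theorem~\ref{thm:lin-ss} for semi-simple abelianness, Proposition~\ref{prop:Hom-chi} for $\End_{\cC}(\bone_{\cC})=k$, and the absorption identity $e_{\chi}(X \uotimes Y) = (e_{\chi}X) \uotimes (e_{\chi}Y)$ (the paper's~\eqref{eq:echi}) as the key structural input. The only difference is cosmetic: you unwind the snake equations by hand for the generators $e_{\chi}[X]$, whereas the paper packages the same observation more compactly by noting that $X \mapsto e_{\chi}X$ is a symmetric monoidal, essentially surjective functor $k[\cE]^{\sharp} \to \cC$ and invoking the general fact that such functors preserve rigid objects.
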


\begin{proof}
We first clarify the tensor structure on $\cC$. Let $X$ and $Y$ be objects of $ k[\cE]^{\sharp}$. One readily verifies the identifications
\begin{equation} \label{eq:echi}
e_{\chi} (X \uotimes Y)=(e_{\chi} X) \uotimes Y = X \uotimes (e_{\chi} Y) = (e_{\chi} X) \uotimes (e_{\chi} Y).
\end{equation}
The analogous formula with $e_0$ holds as well. This shows that each piece in the decomposition \eqref{eq:decomp} (and in particular $\cC$) is closed under the tensor product, and that different pieces of the decomposition tensor to~0. Moreover, if $\bone$ is the monoidal unit in $\cE$ then $[\bone]$ is the unit in $k[\cE]^{\sharp}$, and the above formula shows that $e_{\chi} [\bone]$ is the unit in $\cC$.

We now show that $\cC$ satisfies the conditions of Definition~\ref{def:pretan}. By Theorem~\ref{thm:lin-ss}, $\cC$ is a semi-simple abelian category, which verifies (a). It is clear that all $\Hom$ spaces in $\cC$ are finite dimensional over $k$, and so (b) holds. Since $\cE$ is rigid, so is $k[\cE]$ (see the final sentence of \S \ref{ss:linear}), and therefore so is $k[\cE]^{\sharp}$ (see the final paragraph of \S \ref{ss:envelope}). From \eqref{eq:echi}, it follows that the functor $k[\cE]^{\sharp} \to \cC$ given by $X \mapsto e_{\chi} X$ is symmetric monoidal, and so $\cC$ is rigid. Thus (c) holds. Finally, $\End_{\cC}(e_{\chi} [\bone])$ is 1-dimensional over $k$ by Proposition~\ref{prop:Hom-chi}, and so (d) holds.
\end{proof}

Recall that for an object $X$ of a pre-Tannakian category, $\phi_X(n)$ is the length of $X^{\otimes n}$. We define $\psi_X(n)$ to be the dimension of $\End(X^{\otimes n})$. In the semi-simple case these two quantities are related by the inequalities
\begin{displaymath}
\phi_X(n) \le \psi_X(n) \le \phi_X(n)^2.
\end{displaymath}
The following is the main result we are after concerning growth rates in $\cC$.

\begin{proposition} \label{prop:profile}
Let $X$ be an object of $\cE$ and let $Y=e_{\chi} [X]$ be the corresponding object of $\cC$. Then $\psi_Y(n) = [\psi_X(n)]_q$ where $q=\# \bF$.
\end{proposition}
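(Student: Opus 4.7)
The plan is to reduce the claim directly to Proposition~\ref{prop:Hom-chi} applied to the object $X^{\otimes n}$ of $\cE$. For this, I need to identify the $n$-fold tensor power $Y^{\uotimes n}$ with the object $e_{\chi}[X^{\otimes n}]$ of $\cC$, after which the value of $\psi_Y(n) = \dim_k \End_{\cC}(Y^{\uotimes n})$ follows immediately from the formula already established in Proposition~\ref{prop:Hom-chi}.

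First I would record the identification $Y^{\uotimes n} = e_{\chi}[X^{\otimes n}]$. By the definition of the induced monoidal structure on the linearization $k[\cE]$, one has $[X]^{\uotimes n} = [X^{\otimes n}]$, so the content is that
\begin{displaymath}
(e_{\chi}[X])^{\uotimes n} = e_{\chi}\bigl([X]^{\uotimes n}\bigr).
\end{displaymath}
This follows by induction on $n$ from the identities in \eqref{eq:echi}: at each step one uses that $e_{\chi} X \uotimes e_{\chi} Z = e_{\chi}(X \uotimes Z)$, so inserting $e_{\chi}$ on each factor agrees with inserting a single $e_{\chi}$ on the outside. Equivalently, the functor $k[\cE]^{\sharp} \to \cC$, $X \mapsto e_{\chi} X$, is symmetric monoidal, as was noted in the proof of Theorem~\ref{thm:lin-pt}; applying it to $[X]^{\uotimes n}$ gives the desired identification.

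Once this is in place, I would apply Proposition~\ref{prop:Hom-chi} with $X$ replaced by $X^{\otimes n}$. Since $\cE$ is $\bF$-linear, $\End_{\cE}(X^{\otimes n})$ is an $\bF$-vector space of dimension $\psi_X(n)$ by definition, so
\begin{displaymath}
\psi_Y(n) = \dim_k \End_{\cC}\bigl(e_{\chi}[X^{\otimes n}]\bigr) = [\dim_{\bF} \End_{\cE}(X^{\otimes n})]_q = [\psi_X(n)]_q,
\end{displaymath}
which is the claimed formula.

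There is no real obstacle here: the whole content is bookkeeping with the idempotent $e_{\chi}$ and the tensor structure on the linearization. The only point that requires a moment of care is verifying that $e_{\chi}$ distributes across $\uotimes$ in the way used above; this is exactly \eqref{eq:echi}, and its iteration is immediate by induction. After that, Proposition~\ref{prop:Hom-chi} does all the remaining work.
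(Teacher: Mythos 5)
Your proof is correct and follows exactly the paper's approach: identify $Y^{\uotimes n}$ with $e_{\chi}[X^{\otimes n}]$ (via \eqref{eq:echi}, i.e.\ the fact that $X \mapsto e_{\chi}X$ is monoidal) and then apply Proposition~\ref{prop:Hom-chi} to $X^{\otimes n}$. You simply spell out the justification of the identification that the paper states without comment.
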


\begin{proof}
We have $Y^{\uotimes n}=e_{\chi} [X^{\otimes n}]$, and so the result follows from Proposition~\ref{prop:Hom-chi}.
\end{proof}

We can now give some explicit examples of high growth rates.

\begin{example} \label{ex:vec}
Let $\cE=\Vec_{\bF}^{\rf}$. Suppose that $X$ is a $d$-dimensional vector space over $\bF$, with $d \ge 2$. Then $\psi_X(n)=d^{2n}$. Thus letting $Y=e_{\chi} [X]$, we find $\psi_Y(n)=[d^{2n}]_q$, and so we obtain a bound $\phi_Y(n) \ge A \exp(B \exp(C n))$ for explicit positive real numbers $A$, $B$, and $C$. We thus obtain a doubly exponential profile.
\end{example}

\begin{example}
Let $p$ be a prime number and let $\cE=\mathrm{Ver}_p$ be the Verlinde category, which is a semi-simple pre-Tannakian category over the field $\bF$ with $p$ elements (see \cite[\S 3.2]{Ostrik}). This is similar to the previous case, and leads to a doubly exponential profile.
\end{example}

\begin{example} \label{ex:line}
Let $G=\Aut(\bR,<)$ be the group of order preserving self-bijections of the real line, which is an oligomorphic group. In \cite[\S 17]{repst}, we constructed an $F$-linear semi-simple pre-Tannakian category $\uRep_F(G)$ associated to $G$, for any field $F$. This was the first example of a semi-simple pre-Tannakian category of superexponential growth in positive characteristic. It was studied in detail in \cite{line}, where it was named the Delannoy category.

Fix a finite field $\bF$, and let $\cE=\uRep_{\bF}(G)$. The category $\cE$ contains a basic representation $\cC(\bR)$ (the space of $\bF$-valued ``Schwartz functions'' on $\bR$), and we have $\cC(\bR)^{\otimes n}=\cC(\bR^n)$. The endomorphism algebra of this object has an $\bF$-basis corresponding to $G$-orbits on $\bR^{2n}$. The number $a(n)$ of $G$-orbits on $\bR^n$ is the \defn{ordered Bell number} or \defn{Fubini number}, and is asymptotic to $\tfrac{1}{2} n!\log(2)^{-n-1}$ (see \cite{OEIS} for details). Letting $Y=e_{\chi}[\cC(\bR)]$, we thus find that $\psi_Y(n)=[a(2n)]_q$. We have $\log(a(n)) \ge n \log{n}-Cn$ for some constant $C$, and so
\begin{displaymath}
\log(a(2n)) \ge 2n \log(2n) - 2Cn \ge n \log{n}
\end{displaymath}
for $n \gg 0$. We thus have
\begin{displaymath}
\phi_Y(n) \ge A\exp(B\exp(n\log{n})),
\end{displaymath}
for explicit positive real numbers $A$ and $B$, and $n \gg 0$.
\end{example}

We close with one final computation. Recall that if $\phi$ is an endomorphism in an $F$-linear pre-Tannakian category then it has a categorical trace $\ul{\tr}(\phi)$, which belongs to $F$. If $X$ is an object, its categorical dimension $\ul{\dim}(X)$ is defined as the categorical trace of the identity. In what follows, we put $\chi(0)=0$.

\begin{proposition}
Let $\phi \colon X \to X$ be a morphism in $\cE$, and let $e_{\chi} [\phi] \colon e_{\chi} [X] \to e_{\chi} [Y]$ be the induced morphism in $\cC$. Then $\ul{\tr}(e_{\chi} [\phi])=\chi(\ul{\tr}(\phi))$.
\end{proposition}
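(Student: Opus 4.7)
The plan is to track the trace through the two symmetric monoidal functors at play. The linearization functor $\cE \to k[\cE]$ is symmetric monoidal (\S\ref{ss:linear}), and symmetric monoidal functors preserve categorical traces of endomorphisms of rigid objects. Writing $a = \underline{\tr}(\phi) \in \End_\cE(\bone) = \bF$, this gives $\underline{\tr}([\phi]) = [a]$ in $\End_{k[\cE]^\sharp}([\bone]) = k[\bF]$. Next, the functor $F \colon k[\cE]^\sharp \to \cC$ sending $X \mapsto e_\chi X$ is symmetric monoidal by the identifications \eqref{eq:echi} (exactly the observation already used in the proof of Theorem~\ref{thm:lin-pt}), so applying $F$ yields $\underline{\tr}(e_\chi [\phi]) = F(\underline{\tr}([\phi])) = e_\chi \cdot [a]$ inside $\End_\cC(e_\chi [\bone])$.

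What remains is a direct computation in the monoid algebra $k[\bF]$. If $a = 0$, then $[b]\cdot[0] = [0]$ for all $b$, and the identity $\sum_{b \in \bF^\times}\chi^{-1}(b) = (q-1)\delta_\chi$ gives $e_\chi \cdot [0] = 0$, which matches the convention $\chi(0) = 0$. If $a \ne 0$, the substitution $c = ba$ in $\frac{1}{q-1}\sum_{b \in \bF^\times}\chi^{-1}(b)[ba]$ produces a factor of $\chi(a)$ and the expression $\chi(a)(e_\chi + \delta_\chi[0])$; combining with the $-\delta_\chi[0]$ term from $e_\chi$ yields
\[
e_\chi \cdot [a] = \chi(a)\, e_\chi + (\chi(a)-1)\delta_\chi[0].
\]
The correction term vanishes in both the trivial-$\chi$ case (since $\chi(a) = 1$) and the non-trivial-$\chi$ case (since $\delta_\chi = 0$), leaving $e_\chi \cdot [a] = \chi(a)\, e_\chi$ in every case.

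Finally, by Proposition~\ref{prop:Hom-chi} the algebra $\End_\cC(e_\chi [\bone])$ is one-dimensional over $k$, with $e_\chi$ (the monoidal unit of $\cC$) acting as its identity element. Under the corresponding identification $\End_\cC(e_\chi[\bone]) = k$, the equality $e_\chi \cdot [a] = \chi(a)\, e_\chi$ becomes $\underline{\tr}(e_\chi[\phi]) = \chi(a) = \chi(\underline{\tr}(\phi))$, as desired. The only real subtlety in the argument is confirming that $F$ is symmetric monoidal in the sense required for trace compatibility — that is, that it respects $\uotimes$ and sends $[\bone]$ to $e_\chi[\bone]$ — and both of these are immediate consequences of \eqref{eq:echi}. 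The rest is bookkeeping and the elementary identity in $k[\bF]$ just computed.
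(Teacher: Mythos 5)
Your proposal is correct and follows essentially the same route as the paper: push the trace through the symmetric monoidal functors $\cE \to k[\cE]^\sharp \to \cC$, landing on multiplication by $e_\chi[a]$ in $\End_\cC(e_\chi[\bone]) \cong k$, and then use the identity $e_\chi[a] = \chi(a)e_\chi$ in $k[\bF]$. The only cosmetic difference is that the paper writes out the evaluation/coevaluation composites explicitly rather than invoking the general fact that symmetric monoidal functors preserve traces, and it asserts $e_\chi[a] = \chi(a)e_\chi$ without the verification you supply.
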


\begin{proof}
Let $\bone$ be the unit object in $\cE$. The composition
\begin{displaymath}
\xymatrix@C=4em{
\bone \ar[r] & X^{\vee} \otimes X \ar[r]^{\id \otimes \phi} \ar[r] & X^{\vee} \otimes X \ar[r] & \bone }
\end{displaymath}
is multiplication by the scalar $a=\ul{\tr}(\phi)$. We thus find that the composition
\begin{displaymath}
\xymatrix@C=4em{
[\bone] \ar[r] & [X]^{\vee} \uotimes [X] \ar[r]^{\id \otimes [\phi]} \ar[r] & [X]^{\vee} \uotimes [X] \ar[r] & [\bone] }
\end{displaymath}
is multiplication by $[a] \in k[\bF]$. Now apply the idempotent $e_{\chi}$. On the one hand, the composition is multiplication by $\ul{\tr}(e_{\chi} [\phi])$. On the other, it is $e_{\chi} [a] = \chi(a) e_{\chi}$; and note that $e_{\chi}$ here is really the identity on $e_{\chi} [\bone]$. The result follows.
\end{proof}

\begin{corollary}
$\ul{\dim}(e_{\chi} [X])=\chi(\ul{\dim}{X})$.
\end{corollary}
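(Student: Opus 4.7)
The plan is to derive the corollary as an immediate consequence of the preceding proposition by specializing to the identity morphism. Recall that for an object $Y$ in a pre-Tannakian category, the categorical dimension is by definition $\ul{\dim}(Y) = \ul{\tr}(\id_Y)$, so it suffices to identify the induced endomorphism $e_\chi [\id_X]$ with the identity on $e_\chi[X]$.

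First I would observe that the functor $\cE \to k[\cE]$ sending $X \mapsto [X]$ and $\phi \mapsto [\phi]$ is a functor, hence preserves identities: $[\id_X] = \id_{[X]}$ in $k[\cE]$, and therefore also in $k[\cE]^\sharp$. Next, applying the idempotent $e_\chi$ (which acts through the $k[\bF]$-structure on the target), I get that $e_\chi[\id_X] \colon e_\chi[X] \to e_\chi[X]$ is simply the identity morphism on $e_\chi[X]$; this is because the inclusion $e_\chi[X] \hookrightarrow [X]$ and the projection $[X] \twoheadrightarrow e_\chi[X]$ are the structural maps of a direct summand and $\id_{[X]}$ restricts to $\id_{e_\chi[X]}$ on this summand.

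Finally, applying the proposition with $\phi = \id_X$ gives
\[
\ul{\dim}(e_\chi[X]) = \ul{\tr}(\id_{e_\chi[X]}) = \ul{\tr}(e_\chi[\id_X]) = \chi(\ul{\tr}(\id_X)) = \chi(\ul{\dim}(X)),
\]
which is the desired equality. There is no real obstacle here; the only subtlety worth a line of justification is that $e_\chi$ applied to an identity yields an identity on the summand, but this is formal from the direct-sum decomposition $[X] = e_0[X] \oplus \bigoplus_\chi e_\chi[X]$ established earlier.
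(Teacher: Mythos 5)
Your proof is correct and is exactly the argument the paper intends (the paper leaves the corollary without a written proof precisely because it follows by specializing the preceding proposition to $\phi = \id_X$). The justification that $e_\chi[\id_X] = \id_{e_\chi[X]}$ is a nice touch, though as you note it is formal.
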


\end{document}